\theoremstyle{plain}
\newtheorem{thm}{Theorem}[section]
\newtheorem{lem}[thm]{Lemma}
\newtheorem{prop}[thm]{Proposition}
\newtheorem{cor}[thm]{Corollary}
\newcommand{\thistheoremname}{}
\newtheorem{genericthm}[thm]{\thistheoremname}
  \newtheorem*{genericthm*}{\thistheoremname}
\newenvironment{namedthm*}[1]
  {\renewcommand{\thistheoremname}{#1}%
   \begin{genericthm*}}
  {\end{genericthm*}}
\theoremstyle{definition}
\newtheorem{defn}{Definition}[section]
\theoremstyle{remark}
\newtheorem{rem}{Remark}[section]
\newcommand{\N}{\mathbb{N}}
\newcommand{\R}{\mathbb{R}}
\newcommand{\C}{\mathbb{C}}
\newcommand{\Z}{\mathbb{Z}}
\newcommand{\T}{\mathbb{T}}
\newcommand{\E}{\mathbb{E}}
\newcommand{\PP}{\mathbb{P}}
\newcommand{\XX}{\mathcal{X}}
\newcommand{\Var}{\mathrm{Var}}
\newcommand{\n}{\|}
\newcommand \spann {{\mathrm{span}}}
\newcommand \Conf {{\mathrm {Conf}}}
\newcommand \an { \text{\,\,and\,\,}}
\newcommand \dist {\mathrm{dist}}
\newcommand \Cov {\mathrm{Cov}}
\newcommand \sine {\mathrm{sine}}
\def\titlerunning#1{\gdef\titrun{#1}}
\def\author#1{\gdef\autrun{\def\and{\unskip, }#1}\gdef\@author{#1}}
\def\address#1{{\def\and{\\\hspace*{18pt}}\renewcommand{\thefootnote}{}%
\footnote {#1}}%
\markboth{\autrun}{\titrun}}
\def\email#1{e-mail: #1}
\def\subjclass#1{{\renewcommand{\thefootnote}{}%
\footnote{\emph{Mathematics Subject Classification (2010):} #1}}}
\def\keywords#1{\par\medskip
\noindent\textbf{Keywords.} #1}
\begin{document}

\baselineskip=17pt

\titlerunning{Linear rigidity of stationary stochastic processes}

\title{Linear rigidity of stationary stochastic processes}
\date{}
\author{ Alexander I. Bufetov, Yoann Dabrowski, Yanqi Qiu}
\maketitle

\address{Alexander I. Bufetov: Aix-Marseille Universit{\'e}, Centrale Marseille, CNRS, I2M, UMR7373,  39 Rue F. Joliot Curie 13453, Marseille Cedex 13, France; Steklov Institute of Mathematics, Moscow; Institute for Information Transmission Problems, Moscow; National Research University Higher School of Economics, Moscow; \email{bufetov@mi.ras.ru}}

\address{Yoann Dabrowski: Universit\'e de Lyon, Universit\'e Lyon 1, Institut Camille Jordan, 43 blvd. du 11 novembre 1918,  F-69622 Villeurbanne cedex, France; \email{dabrowski@math.univ-lyon1.fr}}
 
\address{Yanqi Qiu:  Aix-Marseille Universit{\'e}, Centrale Marseille, CNRS, I2M, UMR7373,  39 Rue F. Joliot Curie 13453, Marseille cedex, France; \email{yqi.qiu@gmail.com}}

\subjclass{60G10, 60G55}

\begin{abstract}
We consider stationary stochastic processes $\{X_n: n\in \Z\}$ such that   $X_0$ lies in the closed linear span of  $\{X_n: n\neq 0\}$; following Ghosh and Peres, we call such processes  linearly rigid. 
Using a criterion of Kolmogorov, we show that it suffices, for a stationary stochastic process to be linearly rigid, that the spectral density vanish at zero and belong to the Zygmund class $\Lambda_{*}(1)$. We next give sufficient condition for stationary determinantal point processes on $\Z$ and on $\R$ to be linearly rigid. Finally, we show that the determinantal point process on $\R^2$ induced by a tensor square of Dyson sine-kernels is {\it not} linearly rigid. 

\keywords{Stationary stochastic processes,  the Kolmogorov criterion, stationary determinantal point processes, rigidity}
\end{abstract}

\section{Introduction}

This paper is devoted to rigidity of stationary determinantal point processes.

Recall that stationary determinantal point processes are strongly chaotic: they have the Kolmogorov property (Lyons \cite{DPP-L}) and the Bernoulli property (Lyons and Steif \cite{Lyons-stationary}); and they satisfy the Central Limit Theorem (Costin and Lebowitz \cite{Costin-Lebowitz}, Soshnikov\cite{DPP-S}). On the other hand, Ghosh \cite{Ghosh-sine} and Ghosh-Peres \cite{Ghosh-rigid} proved, for the determinantal point processes such as Dyson sine process and Ginibre point process, that number of particles in a finite window is measurable with respect to the completion of the sigma-algebra describing the configurations outside that finite window.  Their argument is spectral: they construct, for any small $\varepsilon$, a compactly supported smooth function $\varphi_\varepsilon$, such that  $\varphi_\varepsilon$ equals $1$ in a fixed finite window and the linear statistic corresponding to $\varphi_\varepsilon$ has variance smaller than $\varepsilon$.

In the same spirit, we consider general stationary stochastic processes (in broad sense) $\{X_n: n\in \Z\}$ such that   $X_0$ lies in the closed linear span of  $X_n$, $n\neq 0$; following Ghosh and Peres, we call such processes  linearly rigid. In 1941
Kolmogorov  \cite{kolmogorov-izv}, \cite{kolmogorov-vestnik} gave a  sufficient condition for linear rigidity: namely,  that the spectral density of our process vanish at zero and the integral of the  inverse of the spectral 
density diverge. Such a condition is easy to verify for  example for the sine-process, since the spectral density  $\omega$ in the neighbourhood of zero has the form $\omega(\theta)=|\theta|$. 
More generally,  in order that  a stationary stochastic process  be rigid, we check that it suffices  that the spectral density vanish at zero and belong to the Zygmund class $\Lambda_{*}(1)$. 
We next give sufficient condition for stationary determinantal point processes on $\Z$ and on $\R$ to be rigid. Finally, we show that the determinantal point process on $\R^2$ induced by a tensor square of Dyson sine-kernel is {\it not} linearly rigid.

We now turn to more precise statements.  Let $X = \{X_n : n \in \Z^d\}$ be a multi-dimensional time stationary  stochastic process of real-valued random variables defined on a probability space $(\Omega, \PP)$. Let $H(X) \subset L^2(\Omega, \PP)$ denote the closed subspace linearly spanned by $\{X_n : n \in \Z^d \}$ and let $\check{H}_0(X)$ denote the one linearly spanned by $\{X_n : n \in \Z^d \setminus \{ 0\}\}$. 
 
 \begin{defn}
  The stochastic process $X$ is said to be linearly rigid if 
 \begin{align}
 X_0 \in \check{H}_0(X).
 \end{align}
 \end{defn}

 Let $\Conf(\R^d)$ be the set of locally finite configurations on $\R^d$. 
For a bounded Borel subset $B \subset \R^d$, we denote $N_B: \Conf(\R^d) \rightarrow \N \cup \{0\}$ the function defined by 
 $$
 N_B(\XX) : = \text{\, the cardinality of $B \cap \XX$ }.
 $$ 
 The space $\Conf(\R^d)$ is equipped with the Borel $\sigma$-algebra which is the smallest $\sigma$-algebra making all $N_B$'s measurable. Recall that a point process with phase space $\R^d$ is, by definition, a Borel probability measure on the space $\Conf(\R^d)$. For the background on point process, the reader is referred to Daley and Vere-Jones' book \cite{Daley}.

 Given a stationary point process on $\R^d$ and $\lambda>0$, we introduce the  stationary stochastic process $N^{(\lambda)} = (N_n^{(\lambda)})_{n\in \Z^d}$  by  the formula
\begin{align}\label{def-N}
N_{n}^{(\lambda)}(\XX): =\text{\, the cardinality of $\mathcal{X} \cap\left( n\lambda +  [- \lambda/2, \lambda/2  )^d\right)$.}
\end{align}

\begin{defn}\label{window}
A stationary point process $\PP$ on $\R^d$ is called {\bf linearly rigid}, if for any $\lambda>0$, the stationary stochastic process $N^{(\lambda)} = (N_n^{(\lambda)})_{n\in \Z^d}$ is linearly rigid, i.e., 
$$
N_0^{(\lambda)} \in \check{H}_0(N^{(\lambda)}).
$$
\end{defn}

\medskip

The above  definition is motivated by the definition due to Ghosh and Peres of rigidity of point processes on $\R^d$, see \cite{Ghosh-sine} and \cite{Ghosh-rigid}.  Given a Borel subset $C \subset \R^d$, we will denote 
 $$
 \mathcal{F}_C  = \sigma(\{N_B:  B \subset C, B \text{ bounded Borel}\})
 $$
 the $\sigma$-algebra generated by all  random variables of the form $N_B$ where $B \subset C$ ranges over all bounded Borel subsets of $C$. Let $\PP$ be a point process on $\R$, i.e., $\PP$ is  a Borel probability on $\Conf(\R^d)$,  and denote $\mathcal{F}_C^{\PP}$ for the $\PP$-completion of $\mathcal{F}_C$.

\begin{defn}[Ghosh \cite{Ghosh-sine}, Ghosh-Peres \cite{Ghosh-rigid}]\label{rigid-defn}
A point process $\PP$ on $\R^d$  is called {\bf number rigid}, if for any bounded Borel set $B \subset \R^d$ with Lebesgue-negligible boundary $\partial B$, the random variable $N_B $
is $\mathcal{F}_{\R^d \setminus B}^\PP$-measurable. 
\end{defn}

\begin{rem}
Of course, in the above definition, it suffices to take Borel sets $B$  of the form $[-\gamma, \gamma)^d$ for $\gamma >Â 0$, cf. \cite{Ghosh-rigid}. 
\end{rem}

A linear rigid stationary point process on $\R^d$ is of course rigid in the sense of Ghosh and Peres. Observe that proofs for rigidity in \cite{Ghosh-sine}, \cite{Ghosh-rigid} and \cite{Buf-rigid} in fact establish linear rigidity.  We would like also to mention a notion of insertion-deletion tolerance studied by  Holroyd and Soo in \cite{Soo}, which is in contrast to the notion of rigidity property.

\section{The Kolmogorov criterion for linear rigidity} 
 In this note, the Fourier transform of a function $f: \R^d \rightarrow \C$ is defined as 
 $$
 \widehat{f}(\xi)= \int_{\R^d} f(x) e^{- i 2 \pi x \cdot \xi} dx.
 $$
 Denote by $\T^d = \R^d/\Z^d$ the $d$-dimensional torus. In what follows, we identify $\T^d$ with $[-1/2, 1/2)^d$. The Fourier coefficients of a measure $\mu$ on $\T^d$ are given, for any $k\in \Z^d$, by the formula 
 $$
 \hat{\mu}(k) = \int_{\T^d} e^{- i 2 \pi k \cdot \theta}d\mu_X( \theta), \text{ where } k\cdot \theta: = k_1 \theta_1 + \cdots + k_d \theta_d.
 $$

 Denote by $\mu_X$ the spectral measure of $X$, i.e.,
\begin{align}\label{sp-m}
\forall k \in \Z^d, \quad \E(X_0X_{k}) = \E(X_nX_{n+k})=  \int_{\T^d} e^{- i 2 \pi k \cdot \theta}d\mu_X( \theta) = \hat{\mu}_X(k).
\end{align}

Recall that we have the following natural isometric isomorphism
\begin{align}\label{iso}
H(X) \simeq  L^2 (\T^d, \mu_X),
\end{align}
by assigning to $X_n\in H(X)$ the function $\theta \mapsto e^{i 2 \pi n \cdot \theta} \in L^2(\T^d, \mu_X)$. 

Let $\mu_X = \mu_a + \mu_s$ be the Lebesgue decomposition of $\mu_X$ with respect to the normalized Lebesgue measure $m(d\theta) = d\theta_1 \cdots d\theta_d$ on $\T^d$, i.e., $\mu_a$ is absolutely continuous with respect to $m$ and $\mu_s$ is singular to $m$.
Set 
$$
\omega_X (\theta)  : = \frac{d\mu_a}{dm} (\theta).
$$

\begin{lem}[The Kolmogorov Criterion ]\label{K}
We have 
$$
\dist (X_0,  \check{H}_0(X)) =   \left(\int_{\T^d} \omega_X^{-1} dm\right)^{-1/2},
$$
where by $\dist (X_0,  \check{H}_0(X))$ we mean the least $L^2$-distance between the random variable $X_0$ and the linear space $\check{H}_0(X)$ and the right side is to be interpreted as zero if 
$
\int_{\T^d}  \omega_X^{-1} dm  = \infty.
$
\end{lem}

\begin{cor}\label{prop-l-rig}
The stationary stochastic process $X= (X_n)_{n\in \Z^d}$ is linearly rigid if and only if 
$$
\int_{\T^d}  \omega_X^{-1} dm  = \infty.
$$
\end{cor}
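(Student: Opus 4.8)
The plan is to derive Corollary~\ref{prop-l-rig} directly from Lemma~\ref{K}. By definition, the stationary process $X = (X_n)_{n \in \Z^d}$ is linearly rigid precisely when $X_0 \in \check H_0(X)$, which is equivalent to saying that $\dist(X_0, \check H_0(X)) = 0$, since $\check H_0(X)$ is a closed subspace of $H(X) \subset L^2(\Omega, \PP)$ and the distance of a vector to a closed subspace vanishes if and only if the vector belongs to that subspace.

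The key step is then simply to invoke the formula from Lemma~\ref{K}:
$$
\dist(X_0, \check H_0(X)) = \left( \int_{\T^d} \omega_X^{-1}\, dm \right)^{-1/2},
$$
with the convention that the right-hand side is $0$ when $\int_{\T^d} \omega_X^{-1}\, dm = \infty$. The quantity $\left( \int_{\T^d} \omega_X^{-1}\, dm \right)^{-1/2}$ equals zero if and only if $\int_{\T^d} \omega_X^{-1}\, dm = \infty$: indeed, since $\omega_X \geq 0$, the integral $\int_{\T^d} \omega_X^{-1}\, dm$ is a well-defined element of $(0, \infty]$ (it is strictly positive because $\omega_X^{-1} > 0$ wherever $\omega_X < \infty$, and one cannot have $\omega_X = \infty$ on a set of positive measure as $\omega_X \in L^1$), and a finite positive number raised to the power $-1/2$ is again a finite positive number, hence nonzero. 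Combining the two equivalences yields that $X$ is linearly rigid if and only if $\int_{\T^d} \omega_X^{-1}\, dm = \infty$.

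There is essentially no obstacle here: the corollary is a formal restatement of Lemma~\ref{K} together with the elementary observation that for a nonnegative measurable function the reciprocal integral takes values in $(0,\infty]$, so its inverse square root vanishes exactly in the divergent case. The only minor point worth spelling out is why $\int_{\T^d} \omega_X^{-1}\, dm$ cannot be $0$ or a "degenerate" value; this follows because $\omega_X \in L^1(\T^d, m)$ forces $\{\omega_X = \infty\}$ to be $m$-null, so $\omega_X^{-1}$ is a well-defined positive function $m$-a.e., and hence its integral is strictly positive.
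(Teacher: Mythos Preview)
Your proof is correct and is exactly the intended argument: the paper states the corollary immediately after Lemma~\ref{K} without a separate proof, precisely because it is the direct reformulation you give --- linear rigidity means $\dist(X_0,\check H_0(X))=0$, and by the Kolmogorov formula this occurs iff $\int_{\T^d}\omega_X^{-1}\,dm=\infty$. Your additional remark that the integral cannot vanish (since $\omega_X\in L^1(m)$ forces $\omega_X<\infty$ $m$-a.e.) is a harmless clarification.
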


Lemma \ref{K} is due to Kolmogorov \cite{kolmogorov-izv}, \cite{kolmogorov-vestnik}. 
 For the reader's convenience, we include its proof. 

\begin{proof}[Proof of Lemma \ref{K}] We follow the argument of Lyons-Steif \cite{Lyons-stationary}.
By the Lebesgue decomposition of $\mu$, we may take a subset $A \subset \T^d$ of full Lebesgue measure $m(A) =1$, such that $\mu_a(A) =1$ and $\mu_s(A) = 0$.

Denote $$L_0  = \overline{\spann}^{L^2(\T^d, \mu_X)} [e^{i 2 \pi n \cdot \theta}: n \ne 0].$$
By  the isometric isomorphism \eqref{iso}, it suffices to show that 
\begin{align}\label{distance}
\dist (1, L_0) =   \left(\int_{\T^d} \omega_X^{-1} dm\right)^{-1/2},
\end{align}
where $1$ is the constant function taking value $1$. 
Write
$$
1  =  p + h, \text{\, such that \,} p \perp L_0,  h \in L_0.
$$
Modifying, if necessary,  the values of $p$ and $h$ on a $\mu$-negligible subset, we may assume that 
$$
1 = p( \theta) + h(\theta) \text{\, for all $\theta\in \T^d$.}
$$
Since $p\perp L_0$,  we have 
\begin{align}\label{fourier}
0 = \langle p, e^{i 2 \pi n \cdot \theta} \rangle_{L^2(d\mu)} = \int_{\T^d} p(\theta) e^{- i 2 \pi n \cdot \theta} d\mu(\theta), \text{\, for any $n \in \Z^d \setminus \{0\}$.}
\end{align}
Let $\xi\in \C$ denote 
$$
\xi =  \int_{\T^d} p(\theta) d\mu(\theta). 
$$
Then by \eqref{fourier}, all the Fourier coefficients of the complex measure $ p\cdot d\mu$ coincide with the corresponding Fourier coefficients of $\xi dm$ (the multiple of Lebesgue measure $dm$ by $\xi$), consequently, we have
$$
p \cdot d \mu  =  \xi dm.
$$
It follows that $p$ must vanish almost everywhere with respect to the singular component $\mu_s$ of $\mu$, and $p(\theta) \omega_X (\theta) =\xi$  for $m$-almost every $\theta\in \T^d$.  Thus we have 
\begin{align}\label{p-l2-norm}
\n p \n_{L^2(d\mu)} = \n p \n_{L^2(d\mu_a)},
\end{align}
and
\begin{align}\label{h-value}
h(\theta) = 1 - \xi  \omega_X(\theta)^{-1} \text{\, for $m$-almost every $\theta\in \T^d$.}
\end{align}

\medskip

{\bf Case 1:} 
$\int_{\T^d}  \omega_X^{-1} dm < \infty.$ 

 Define a function $f: \T^d \rightarrow \C$ by $f = \omega_X^{-1} \chi_A$.  Then 
$
f \in L^2(d\mu)\ominus L_0.
$ 
Indeed, 
$$
\n f \n_{L^2(d\mu)}^2 = \int_{\T^d}  \omega_X^{-2} \chi_{A} d\mu = \int_{\T^d}  \omega_X^{-2} d\mu_a = \int_{\T^d}  \omega_X^{-1} dm < \infty.
$$
And, for all $n \in \Z^d\setminus 0$,  
$$
\langle  f, e^{i 2 \pi n \cdot \theta} \rangle_{L^2(d\mu)} = \int_{\T^d}  \omega_X(\theta)^{-1} \chi_{A}(\theta) e^{-i 2 \pi n \cdot \theta}  d\mu(\theta)= \int_{\T^d}  e^{-i 2 \pi n \cdot \theta}  dm(\theta) = 0.
$$
It follows that $f \perp h$, i.e.,
$$
0 = \langle h, f \rangle_{L^2(d\mu)} = \int_{\T^d}  h   \omega_X^{-1} \chi_A d\mu  = \int_{\T^d}  h dm.
$$
By \eqref{h-value}, we get 
$$
\int_{\T^d}  (1 - \xi \omega_X^{-1} )  dm=0,
$$
and hence
$$
\xi =  \Big(\int_{\T^d} \omega_X^{-1} dm\Big)^{-1}.
$$
It follows that 
\begin{align*}
\dist (1, L_0)^2 = \n p \n_{L^2(d\mu)}^2 = \n p\n_{L^2(d\mu_a)}^2 = \xi^2 \int_{\T^d}  \omega_X^{-2} \omega_X dm = \xi.
\end{align*}
This shows the desired equality \eqref{distance}.

\medskip

{\bf Case 2:}  $\int_{\T^d}  \omega_X^{-1} dm = \infty.$ 

We claim that $\xi=0$. If the claim were verified, then we would get the desired identity in this case
$$
\dist (1, L_0) = 0.
$$
So let us turn to the proof of the claim. We argue by contradiction. If $\xi \ne 0$, then $p \ne 0$ and 
$$
\n p \n_{L^2(d\mu)}^2  = \n p\n_{L^2(d\mu_a)}^2 = \xi^2 \n \omega_X^{-1} \n_{L^2(d\mu_a)}^2 = \xi^2  \int_{\T^d}  \omega_X^{-1} dm  = \infty.
$$ 
This contradicts the fact that $p \in L^2(d\mu)$.
\end{proof}

 \begin{rem}
 If the spectral measure $\mu_X$ is absolutely continuous and given by $\mu_X(dz) = \omega (z) dm(z)$, then for any $n \in \N$, the following are equivalent: 
 \begin{enumerate}
 \item[$(i)$] $\sum_{l = -n}^n X_l \in \overline{\spann}^{H(X)} \left\{ X_k : k \in \Z, | k | \ge n +1\right\}  $.
 \item[$(i)'$] $\sum_{ l  = - n }^n z^l \in \overline{\spann}^{L_{\omega}^2}\{z^l: k \in \Z, | k | \ge n + 1 \}$.
 \item[$(ii)$] For any $w_1, w_2, \dots, w_n \in \C\setminus \{1\}$, $$\int_{\T} \frac{\prod_{l = 1}^n  | (z - w_l) (z - \bar{w}_l)|^2}{\omega(z)} d m(z) = \infty. $$
 \item[$(ii)'$] For any $w_1, w_2, \dots, w_n\in \T \setminus\{1\},$  $$\int_{\T} \frac{\prod_{l = 1}^n  | (z - w_l) (z - \bar{w}_l)|^2}{\omega(z)} d m(z) = \infty. $$
 \end{enumerate}

Indeed, $(i)$ and $(i)'$ are equivalent.  Assume $(i)'$ is satisfied, let us show $(ii)$. If $(ii)$ is violated, then there exist $w_1, w_2,\dots, w_n\in \C\setminus\{1\}$, such that $$\int_{\T} \frac{\prod_{l = 1}^n  | (z - w_l) (z - \bar{w}_l)|^2}{\omega_X(z)} d m(z) < \infty. $$ Define $$h(z) : =  \frac{ \prod_{ l = 1}^n( z-w_l)(z - \bar{w}_l)}{z^n\omega_X(z)} = \frac{\sum_{l = -n}^n a_l z^l}{\omega_X(z)}.$$ 
Then $h \in L_\omega^2(\T) \ominus \overline{\spann}^{L^2_\omega}\{z^k: k \in \Z, | k | \ge n +1\}$. We have \begin{align*}\Big(\sum_{l = -n}^n z^l, h(z) \Big)_{L_\omega^2} & = \sum_{l = -n}^n a_l  = \prod_{l = 1}^n | 1 - w_l|^2 \ne 0. \end{align*}  This contradicts $(i)'$, hence $(i)'$ implies $(ii)$. 
 
 Conversely, let us assume $(ii)$ and show $(i)'$. If $(i)'$ is not satisfied, then there exists a function $g \in L_\omega^2 \ominus \overline{\spann}^{L_\omega^2}\{ z^l: k \in \Z, | k | \ge n + 1 \}$, such that $g \ne 0$ and the scalar product $ (\sum_{l = -n }^n z^l, g)_{L_\omega^2} \ne 0.$ We have  $$ 0 = \int_{\T} g(z) z^k \omega(z) dm(z), \text{ for any $k \in \Z$, $| k | \ge n + 1$. }$$ This implies that there exists $ (c_{-n}, \dots, c_n)$ such that $$g(z) \omega(z) = \sum_{-n}^n c_l z^l.$$ Hence $g(z)   = \frac{ \sum_{ -n}^n c_l z^l}{\omega(z)}$ and $$\sum_{-n}^n c_l = \Big(g, \sum_{-n}^n z^l\Big)_{L_\omega^2} \ne 0 .$$ Since $\omega(z) = \omega(z^{-1})$, if we denote $\check{g}(z) : = g(z^{-1})$, then $\check{g} \in L_\omega^2(\T)$. Thus $\Re (g + \check{g})$ and $\Im (g  + \check{g})$ are functions in $L_\omega^2(\T).$ We have $$\Re (g + \check{g})(z) = \frac{ \sum_{-n}^n  \Re(c_l) (z^l + z^{-l})}{\omega(z)} \text{ \, and  \, } \Im(g + \check{g})(z) = \frac{\sum_{-n}^n \Im(c_l) ( z^l+ z^{-l})}{\omega(z)}.$$ Since $\sum_{-n}^n c_l \ne 0$, we may assume without loss of generality that $\sum_{-n}^n \Re (c_l) \ne 0$. Define $P(z)$ the polynomial given by $P(z) = z^n \sum_{-n}^n \Re(c_l) (z^l + z^{-l})$ and let $m = \deg P \le m $ then there exist $w_1, \dots, w_m$ such that  $$P(z) = \Re(c_m) \prod_{l = 1}^m (z - w_l) (z - \bar{w}_l).$$ Since $P(1) = \sum_{-n}^n \Re(c_l) \ne 0$, we know that $w_1, \dots, w_m$ are all different from 1. Now using the fact $\Re( g + \check{g}) \in L_\omega^2$, we deduce that $$ \int_\T \frac{\prod_{l  = 1}^m | (z-w_l)(z- \bar{w}_l)|^2}{\omega(z)} <Â \infty,$$ which of course violates $(ii)$. This contradiction shows that $(ii)$ implies $(i)'$. 

The equivalence between $(ii)$ and $(ii)'$ is obvious. 
 
\end{rem}

Denote by $\Cov (U, V)$  the covariance between two random variables $U$ and $V$: 
$
\Cov (U, V)   = \E(UV) -  \E(U) \E(V).
$

If $X  = (X_n)_{n\in \Z^d}$ is a stochastic process such that 
\begin{align}\label{cov-sum}
\sum_{n\in \Z^d} | \Cov(X_0, X_n)| < \infty,
\end{align}
then we may define a continuous function on $\T^d$ by the formula
\begin{align}\label{density}
 \omega_X(\theta): =    \sum_{n\in \Z^d} \Cov(X_0, X_n) e^{i 2\pi n \cdot \theta}.
\end{align}

\begin{lem}\label{lem-density}
Let  $X  = (X_n)_{n\in \Z^d}$ be a stationary stochastic process satisfying condition \eqref{cov-sum}.
Then we have the following explicit Lebesgue decomposition of $\mu_X$: 
\begin{align}\label{measure-dec}
\mu_X = (\E X_0)^2 \cdot \delta_0 + \omega_X \cdot m, 
\end{align}
where $\delta_0$ is the Dirac measure on the point $0 \in \T^d$ and $\omega_X$ is the function on $\T^d$ defined by \eqref{density}.
\end{lem}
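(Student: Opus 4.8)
The plan is to identify $\mu_X$ through its Fourier coefficients and invoke uniqueness. Recall from \eqref{sp-m} that $\mu_X$ is the spectral measure, i.e.\ the finite positive Borel measure on $\T^d$ determined by $\hat\mu_X(k)=\E(X_0X_k)$ for all $k\in\Z^d$; it is finite since $\mu_X(\T^d)=\hat\mu_X(0)=\E(X_0^2)<\infty$. By stationarity $\E X_k=\E X_0$ for every $k$, hence
$$
\hat\mu_X(k)=\E(X_0X_k)=\Cov(X_0,X_k)+(\E X_0)^2,\qquad k\in\Z^d.
$$

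Next I would check that the right-hand side of \eqref{measure-dec} has exactly these Fourier coefficients. Condition \eqref{cov-sum} forces the series \eqref{density} to converge absolutely and uniformly, so $\omega_X$ is a well-defined continuous function on $\T^d$ whose Fourier series is \eqref{density} itself; interchanging sum and integral (justified by uniform convergence) gives $\widehat{\omega_X\cdot m}(k)=\Cov(X_0,X_k)$ for all $k$. Since also $\widehat{\delta_0}(k)=1$ for all $k$, the measure $\nu:=(\E X_0)^2\,\delta_0+\omega_X\cdot m$ — a finite complex Borel measure, because $\omega_X$ is bounded and $m$ is finite — satisfies $\hat\nu(k)=(\E X_0)^2+\Cov(X_0,X_k)=\hat\mu_X(k)$ for every $k\in\Z^d$.

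Now I would invoke uniqueness of a finite complex Borel measure on the compact group $\T^d$ given its Fourier coefficients: by Stone--Weierstrass, trigonometric polynomials are dense in $C(\T^d)$, so two finite complex measures agreeing on all characters agree on a dense subspace of $C(\T^d)$ and therefore coincide. Hence $\mu_X=\nu=(\E X_0)^2\,\delta_0+\omega_X\cdot m$. Finally, since $\delta_0$ is carried by the single point $0\in\T^d$, which is $m$-negligible, $\delta_0\perp m$, whereas $\omega_X\cdot m\ll m$; by uniqueness of the Lebesgue decomposition this representation is precisely the Lebesgue decomposition of $\mu_X$, so $\mu_a=\omega_X\cdot m$ and $\mu_s=(\E X_0)^2\,\delta_0$. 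In particular $\omega_X\ge 0$ $m$-a.e.\ because $\mu_X\ge 0$, and $\omega_X$ is real-valued since by stationarity $\Cov(X_0,X_{-n})=\Cov(X_n,X_0)=\Cov(X_0,X_n)\in\R$, which makes \eqref{density} invariant under complex conjugation.

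There is no serious obstacle here; the only points deserving a line of care are that the absolutely convergent series \eqref{density} genuinely represents the continuous function $\omega_X$ (so that its Fourier coefficients are read off termwise) and the appeal to the uniqueness of a measure with prescribed Fourier coefficients. Everything else is bookkeeping with the covariance/mean splitting and the uniqueness of the Lebesgue decomposition.
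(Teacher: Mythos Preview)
Your proof is correct and follows essentially the same approach as the paper: both compute the Fourier coefficients of $\nu=(\E X_0)^2\,\delta_0+\omega_X\cdot m$, match them with $\hat\mu_X(k)=\E(X_0X_k)$ via the covariance/mean splitting, and invoke uniqueness of a finite measure on $\T^d$ from its Fourier coefficients. You simply spell out more of the routine justifications (Stone--Weierstrass, uniqueness of the Lebesgue decomposition, nonnegativity of $\omega_X$) than the paper does.
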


\begin{proof}
Note that, under the assumption \eqref{cov-sum}, the function $\omega_X(\theta)$ is well-defined and  continuous on $\T^d$. For proving the decomposition \eqref{measure-dec}, it suffices to show that the Fourier coefficients of $\mu_X$ coincide with those of $\nu_X: = (\E X_0)^2 \cdot \delta_0 + \omega_X \cdot m$. But if $n \in \Z^d$, then 
\begin{align*}
\hat{\nu}_X(n) = (\E X_0)^2 + \Cov(X_0, X_n)  = \E(X_0X_n)  = \hat{\mu}_X(n). 
\end{align*}
The lemma is completely proved.
\end{proof}

\section{A sufficient condition for linear rigidity}

\begin{thm}\label{l-rigid}
Let  $X  = (X_n)_{n\in \Z}$ be a stationary stochastic process. If
\begin{align}\label{summable}
\sup_{N \ge 1 }  \left ( N \sum_{|n| \ge N}  | \Cov(X_0, X_n)| \right) < \infty,
\end{align}
and 
\begin{align}\label{cov-cond}
\sum_{n \in \Z}  \Cov(X_0, X_n) = 0.
\end{align}
Then  $X$ is linearly rigid.
\end{thm}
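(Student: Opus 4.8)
The plan is to reduce to the Kolmogorov criterion and then estimate the spectral density near the origin. Write $c_n := \Cov(X_0, X_n)$ and $M := \sup_{N\ge 1}\bigl(N\sum_{|n|\ge N}|c_n|\bigr)$, which is finite by \eqref{summable}. Taking $N=1$ in \eqref{summable} already gives $\sum_{n\in\Z}|c_n|<\infty$, so hypothesis \eqref{cov-sum} of Lemma \ref{lem-density} holds; hence $\mu_X = (\E X_0)^2\,\delta_0 + \omega_X\,m$, where $\omega_X(\theta)=\sum_{n\in\Z}c_n e^{i2\pi n\theta}$ is continuous on $\T$. Since $\mu_X\ge 0$ and $\delta_0$ is carried by a single point, $\omega_X\ge 0$ on $\T$; and hypothesis \eqref{cov-cond} says precisely that $\omega_X(0)=\sum_{n}c_n=0$. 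By Corollary \ref{prop-l-rig}, it therefore suffices to show $\int_{\T}\omega_X^{-1}\,dm=\infty$, and for this I will control the rate at which $\omega_X$ vanishes at $\theta=0$.

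The key estimate is that there are $C>0$ and $\varepsilon_0\in(0,1)$ with $\omega_X(\theta)\le C\,|\theta|\log(1/|\theta|)$ for all $0<|\theta|<\varepsilon_0$. Using $\omega_X(0)=0$, write $\omega_X(\theta)=\sum_{n}c_n\bigl(e^{i2\pi n\theta}-1\bigr)$ and bound $|e^{i2\pi n\theta}-1|\le\min\{2,\,2\pi|n||\theta|\}$. Splitting the sum at $K:=\lfloor 1/|\theta|\rfloor$ gives
\[
|\omega_X(\theta)|\;\le\;2\pi|\theta|\sum_{1\le|n|\le K}|n|\,|c_n|\;+\;2\sum_{|n|\ge K+1}|c_n|.
\]
The second sum is at most $2M/(K+1)\le 2M|\theta|$ by \eqref{summable}. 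For the first, writing $|n|=\#\{j:1\le j\le |n|\}$ and interchanging the order of summation,
\[
\sum_{1\le|n|\le K}|n|\,|c_n|\;=\;\sum_{j=1}^{K}\ \sum_{j\le|n|\le K}|c_n|\;\le\;\sum_{j=1}^{K}\ \sum_{|n|\ge j}|c_n|\;\le\;\sum_{j=1}^{K}\frac{M}{j}\;\le\;M\bigl(1+\log K\bigr),
\]
so the first term is $O\bigl(|\theta|\log(1/|\theta|)\bigr)$, proving the claim.

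From the key estimate, $\omega_X(\theta)^{-1}\ge\bigl(C|\theta|\log(1/|\theta|)\bigr)^{-1}$ for $0<|\theta|<\varepsilon_0$ (reading the left side as $+\infty$ wherever $\omega_X(\theta)=0$), and therefore
\[
\int_{\T}\omega_X^{-1}\,dm\;\ge\;\frac{2}{C}\int_{0}^{\varepsilon_0}\frac{d\theta}{\theta\log(1/\theta)}\;=\;+\infty,
\]
since $-\log\log(1/\theta)$ is an antiderivative of the integrand on $(0,1)$. By Corollary \ref{prop-l-rig}, this gives $X_0\in\check{H}_0(X)$, i.e., $X$ is linearly rigid.

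The one delicate point is the key estimate: the elementary splitting loses a logarithmic factor relative to the bound $\omega_X(\theta)=O(|\theta|)$ that a Zygmund-type hypothesis would suggest, and the saving grace is that $\int_0 \bigl(\theta\log(1/\theta)\bigr)^{-1}\,d\theta$ still diverges, so the weaker estimate suffices. It is here that \eqref{summable} (the $1/N$ decay of the covariance tails) enters essentially, while \eqref{cov-cond} is exactly what makes $\omega_X$ vanish at the frequency $0$ — without it the integrand $\omega_X^{-1}$ would be bounded near $0$ and linear rigidity could fail.
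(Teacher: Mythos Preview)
Your proof is correct. Both you and the paper reduce to the Kolmogorov criterion via Lemma~\ref{lem-density} and Corollary~\ref{prop-l-rig}, but the estimate on $\omega_X$ near $0$ is obtained differently. The paper invokes M\'oricz's theorem (Theorem~\ref{MM}) to place $\omega_X$ in the Zygmund class $\Lambda_*(1)$, then uses the even symmetry $\omega_X(\theta)=\omega_X(-\theta)$ (all Fourier coefficients are real) together with $\omega_X(0)=0$ to write $\omega_X(\theta)=\tfrac12\bigl(\omega_X(\theta)+\omega_X(-\theta)-2\omega_X(0)\bigr)\le C|\theta|$ directly from the second-difference bound; divergence of $\int_\T\omega_X^{-1}\,dm$ then follows from $\int_0 d\theta/\theta=\infty$. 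Your route is more elementary---it avoids citing M\'oricz altogether---by splitting $\sum_n c_n(e^{i2\pi n\theta}-1)$ at $K=\lfloor 1/|\theta|\rfloor$ and using the Abel/Fubini rearrangement $\sum_{|n|\le K}|n|\,|c_n|=\sum_{j=1}^K\sum_{j\le|n|\le K}|c_n|$; the price is the extra logarithm in $\omega_X(\theta)=O\bigl(|\theta|\log(1/|\theta|)\bigr)$, which, as you correctly observe, is harmless since $\int_0\bigl(\theta\log(1/\theta)\bigr)^{-1}d\theta$ still diverges. The paper's approach buys the sharper pointwise bound $\omega_X(\theta)=O(|\theta|)$ (potentially useful in its own right), while yours is fully self-contained.
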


\begin{rem}
The condition \eqref{summable} is a sufficient condition such that the spectral density $\omega_X$ is a function in the Zygmund class $\Lambda_{*}(1)$, see below for definition.  The condition \eqref{cov-cond} implies in particular that  $\omega_X$ vanishes at the point $0 \in \T$.
\end{rem}

We shall apply a result of F. M\'oricz \cite[Thm. 3]{Moricz} on absolutely convergent Fourier series and Zygmund class functions. Recall that a continuous $1$-periodic function $\varphi$ defined on $\R$ is said to be in the {\it Zygmund class} $\Lambda_{*}(1)$, if there exists a constant $C$ such that 
\begin{align}\label{A}
| \varphi (x+h) - 2 \varphi(x)  + \varphi(x-h) | \le C h
\end{align}
for all $x\in \R$ and for all $h>0$.

\begin{thm}[M\'oricz, \cite{Moricz}]\label{MM}
If $\{c_n\}_{n\in \Z} \in \C $ is such that
\begin{align}\label{big-O}
\sup_{N \ge 1} \left( N \sum_{|n| \ge N}  | c_n|\right) < \infty, 
\end{align}
then the function $\varphi (\theta)= \sum_{n\in \Z} c_n e^{i 2 \pi n \theta}$ is in the Zygmund class $\Lambda_{*}(1)$.
\end{thm}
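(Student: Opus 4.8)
The plan is to estimate the symmetric second difference $\Delta_h^2\varphi(x) := \varphi(x+h) - 2\varphi(x) + \varphi(x-h)$ directly from the Fourier expansion of $\varphi$. First I would observe that hypothesis \eqref{big-O} applied with $N = 1$ already forces $\sum_{n\in\Z}|c_n| =: S < \infty$, so that $\varphi$ is a well-defined continuous function, the series converges absolutely and uniformly (justifying all term-by-term manipulations below), and $\|\varphi\|_\infty \le S$. Consequently $|\Delta_h^2\varphi(x)| \le 4S \le 4Sh$ whenever $h \ge 1$, so it will suffice to establish \eqref{A} for $0 < h < 1$.

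For such $h$, the elementary identity $e^{i2\pi nh} - 2 + e^{-i2\pi nh} = -4\sin^2(\pi nh)$ gives
$$|\Delta_h^2\varphi(x)| = \Big| 4\sum_{n\in\Z} c_n e^{i2\pi nx}\sin^2(\pi nh)\Big| \le 4\sum_{n\in\Z}|c_n|\sin^2(\pi nh),$$
and I would then split this sum at $N := \lfloor 1/h\rfloor \ge 1$. Writing $A := \sup_{M\ge 1} M\sum_{|n|\ge M}|c_n| < \infty$, so that $\sum_{|n|\ge k}|c_n| \le A/k$ for all $k \ge 1$, the high-frequency part is controlled crudely by $\sin^2 \le 1$: since $N+1 > 1/h$, one gets $\sum_{|n|>N}|c_n|\sin^2(\pi nh) \le \sum_{|n|\ge N+1}|c_n| \le A/(N+1) < Ah$. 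For the low-frequency part I would use $|\sin(\pi nh)| \le \pi|n|h$, which reduces matters to the estimate
$$\sum_{|n|\le N} n^2|c_n| \le 2AN;$$
granting this, the low-frequency contribution is at most $\pi^2 h^2 \cdot 2AN \le 2\pi^2 Ah$. Combining the two ranges yields \eqref{A} for $0 < h < 1$ with a constant that is a fixed multiple of $\max\{S, A\}$, and together with the trivial case $h \ge 1$ this proves $\varphi \in \Lambda_*(1)$.

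The one step that is not purely mechanical — and the one I would carry out with care — is the ``dual'' inequality $\sum_{|n|\le N} n^2|c_n| = O(N)$, which expresses the hypothesis $\sum_{|n|\ge M}|c_n| = O(1/M)$ on the opposite side of the scale. The clean way to obtain it is to write $n^2 = \sum_{k=1}^{|n|}(2k-1)$ for $n \ne 0$, interchange the order of summation, and apply the tail bound:
$$\sum_{1\le|n|\le N} n^2|c_n| = \sum_{k=1}^{N} (2k-1)\sum_{k\le|n|\le N}|c_n| \le A\sum_{k=1}^{N}\frac{2k-1}{k} \le 2AN,$$
using $(2k-1)/k \le 2$. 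Apart from minor bookkeeping (the $n = 0$ term, the floor in $N = \lfloor 1/h\rfloor$, and the harmless regime $h \ge 1$), I do not expect any genuine obstacle; the whole content lies in pairing the balanced split $N \sim 1/h$ with this Abel-type rearrangement, which is exactly the mechanism converting tail decay of the coefficients into the Zygmund modulus-of-smoothness bound.
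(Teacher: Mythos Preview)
Your proof is correct. The argument is the standard one: split at $N \sim 1/h$, bound the tail by the hypothesis directly, and convert the tail bound into the moment estimate $\sum_{|n|\le N} n^2|c_n| = O(N)$ via the Abel-type rearrangement $n^2 = \sum_{k=1}^{|n|}(2k-1)$. All the bookkeeping (the $h \ge 1$ case, the floor, the $n=0$ term) is handled cleanly; the only cosmetic slip is that you dropped the global factor $4$ coming from $4\sin^2(\pi n h)$ in the displayed low-frequency bound, but this is harmless.

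Note, however, that the paper does \emph{not} supply its own proof of this statement: Theorem~\ref{MM} is quoted from M\'oricz \cite[Thm.~3]{Moricz} as an external input and invoked without argument in the proof of Theorem~\ref{l-rigid}. So there is nothing in the paper to compare your approach against. For what it is worth, your proof is essentially the same as M\'oricz's original: he too splits at $N = \lfloor 1/h\rfloor$, uses $|\sin t| \le |t|$ on the low block and $|\sin t| \le 1$ on the high block, and derives the needed low-frequency moment bound by Abel summation from the tail hypothesis.
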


\begin{proof}[Proof of Theorem \ref{l-rigid}]
First, in view of \eqref{density}, our assumption \eqref{cov-cond} implies 
\begin{align*}
 \omega_X(0)  =0.
\end{align*}

Next, by Theorem \ref{MM}, under the assumption \eqref{summable}, we have 
\begin{align*}
\omega_X  \in \Lambda_{*}(1).
\end{align*}
Since all Fourier coefficients of $\omega_X$ are real, we have 
$$
\omega_X(\theta) = \omega_X(-\theta).
$$ 
Consequently, there exists $C>0$, such that 
\begin{align*}
 \omega_X(\theta)  = \frac{\omega_X(\theta) + \omega_X(-\theta)}{2} 
 =\frac{\omega_X( \theta) + \omega_X(-  \theta )  - 2 \omega_X(0)}{2}\le C |  \theta|,
\end{align*}
whence
$$
\int_\T \omega_X^{-1} dm = \infty,
$$
and the stochastic process $X= (X_n)_{n\in \Z}$ is linearly rigid by the Kolmogorov criterion. 
\end{proof}

\section{Applications to stationary determinantal point processes}

In this section, we first give a sufficient condition for linear rigidity of stationary determinantal point processes on $\R$ and then give an example of a very simple stationary, but not linearly rigid, determinantal point process on $\R^2$. 
We briefly recall the main definitions.
 Let $B\subset \R^d$ be a bounded Borel subset. Let $K_B: L^2(\R^d) \rightarrow L^2(\R^d)$ be the  operator of convolution with the  Fourier transform $\widehat{\chi_B}$ of the indicator function $\chi_B$. In other words, the kernel of $K_B$ is 
\begin{align}\label{kb-kernel}
K_{B}(x,y) = \widehat{\chi_B}(x-y).
\end{align}
In particular, if $d = 1$ and $B  = (-1/2, 1/2)$, then we find the well-known Dyson sine kernel 
$$
K_{\sine}(x,y) = \frac{\sin( \pi (x-y))}{\pi (x-y)}.
$$ 
Note that we always have $K_B(x,x)= K_B(0,0).$
 
Denote by $\PP_{K_B}$ the determinantal point process induced by $K_B$. For the background on  the determinantal point processes, the reader is referred to  \cite{DPP-HKPV}, \cite{DPP-L}, \cite{DPP-M}, \cite{DPP-S}.

\begin{prop}\label{prop-rep}
Let $\PP_{K_B}$ be the stationary determinantal point process on $\R^d$ induced by the kernel $K_B$ in \eqref{kb-kernel}. For any $\lambda>0$, denote by $N^{(\lambda)} = (N_n^{(\lambda)})_{n\in \Z^d}$  the stationary stochastic process associated to $\PP_{K_B}$ as in \eqref{def-N}. Then 
\begin{align}\label{ab-cv}
\sum_{n\in \Z^d} |\Cov(N_0^{(\lambda)}, N_n^{(\lambda)})| < \infty
\end{align}
and
\begin{align}\label{only-root}
\sum_{n\in \Z^d} \Cov(N_0^{(\lambda)}, N_n^{(\lambda)})  =0.
\end{align}
\end{prop}

\begin{proof}
Fix a number $\lambda >0$, for simplifying the notation, let us denote $N_n^{(\lambda)}$ by $N_n$. Denote for any $n\in \Z^d$, 
$$
Q_n = n \lambda + [-\lambda/2, \lambda/2)^d.
$$
By definition of a determinantal point process, we have 
$$
\E(N_n)= \E(N_0) = \int_{Q_0} K_B(x,x) dx  = \lambda^d K_B(0,0).
$$
If $n \ne 0$, we have 
\begin{align*}
\E (N_0 N_n)
& 
= \iint   \chi_{Q_0}(x) \chi_{Q_n} (y) \left|  \begin{array}{cc}  K_B(x,x) & K_B(x,y) \\ K_B(y,x) & K_B(y,y)\end{array}  \right| dxdy 
\\
& = \lambda^{2d} K_B(0,0)^2 - \iint_{Q_0\times Q_n} | K_B(x,y)|^2 dxdy,
\end{align*}
whence
\begin{align}\label{cor-form}
\Cov(N_0, N_n) = -\iint_{Q_0\times Q_n} | K_B(x,y)|^2 dxdy.
\end{align}
We also have
\begin{align*}
&\E(N_0^2) 
=  
\E\left[  \sum_{x, y \in \mathcal{X}} \chi_{Q_0} (x) \chi_{Q_0} (y)  \right] 
\\
& = \E\left[ \sum_{x \in \mathcal{X}} \chi_{Q_0} (x) \right] +  \E \left[\sum_{x, y \in \mathcal{X}, x \ne y} \chi_{Q_0} (x) \chi_{Q_0} (y)   \right]
\\ 
& = \int_{Q_0} K_B(x,x)dx  +
  \iint   \chi_{Q_0}(x) \chi_{Q_0} (y) \left|  \begin{array}{cc}  K_B(x,x) & K_B(x,y) \\ K_B(y,x) & K_B(y,y)\end{array}  \right| dxdy 
  \\ 
  & = \lambda^d K_B(0,0) + \lambda^{2d} K_B(0,0)^2  - \iint_{Q_0\times Q_0} | K_B(x,y)|^2 dxdy,
\end{align*}
whence 
\begin{align}\label{cor-form2}
\Cov(N_0, N_0) = \Var(N_0) = \lambda^d K_B(0,0) - \iint_{Q_0 \times Q_0} | K_B(x,y)|^2 dxdy.
\end{align}
Now recall that  $K_B$ is an orthogonal projection. Thus we have 
\begin{align}\label{rep-kb}
K_B(0,0) = K_B(x,x)  = \int |K_B(x,y)|^2 dy = \sum_{n \in \Z^d} \int_{Q_n} | K_B(x,y)|^2dy.
\end{align}
The identities \eqref{cor-form}, \eqref{cor-form2} and \eqref{rep-kb} imply that 
\begin{align*}
\sum_{n\in \Z^d} \Cov(N_0, N_n) & = \lambda^d K_B(0,0) - \int_{Q_0} dx \sum_{n\in \Z^d} \int_{Q_n} | K_B(x,y)|^2dy 
\\
& =  \lambda^d K_B(0,0) -  \lambda^d K_B(0,0) =0.
\end{align*}
Moreover, the above series converge absolutely. Proposition  \ref{prop-rep} is completely proved.
\end{proof}

\begin{cor}\label{cor-cont-0}
The spectral density $\omega_{N^{(\lambda)}}$ of the stochastic process $N^{(\lambda)} = (N_n^{(\lambda)})_{n\in\Z^d}$ is a continuous non-negative function on $\T^d = [-\frac{1}{2}, \frac{1}{2}]^d$ and  vanishes only at $(0, \cdots, 0)$. 
\end{cor}

\begin{proof}
By Lemma \ref{lem-density}, the spectral density $\omega_{N^{(\lambda)}}$ of the stochastic process $N^{(\lambda)}$  is given by 
\begin{align}\label{s-N-d}
\omega_{N^{(\lambda)}}(\theta_1, \cdots, \theta_d)  = \sum_{n\in \Z^d} \Cov(N_0^{(\lambda)}, N_n^{(\lambda)}) e^{i 2\pi (n_1\theta_1 + \cdots+ n_d\theta_d)}. 
\end{align}
By \eqref{ab-cv}, the series in  \eqref{s-N-d} converges uniformly and absolutely on $\T^d$. It follows that $\omega_{N^{(\lambda)}}$  is a continuous function on $\T^d$.

Now the equality \eqref{only-root} implies that $\omega_{N^{(\lambda)}}(0, \cdots, 0)=0$.  Moreover,   for any $\theta = (\theta_1, \cdots, \theta_d) \in \T^d \setminus\{(0, \cdots, 0)\}$, we have 
\begin{align*}
|\sum_{n\in \Z^d\setminus\{0\}} \Cov(N_0^{(\lambda)}, N_n^{(\lambda)}) e^{i 2\pi (n_1\theta_1 + \cdots+ n_d\theta_d)}| < \sum_{n\in \Z^d\setminus\{0\}} |\Cov(N_0^{(\lambda)}, N_n^{(\lambda)})|.
\end{align*}
By \eqref{cor-form}, we have 
$$
|\Cov(N_0^{(\lambda)}, N_n^{(\lambda)})| = - \Cov(N_0^{(\lambda)}, N_n^{(\lambda)}) \text{for any $n\in\Z^d\setminus\{0\}$.}
$$ 
Note that if $\theta = (\theta_1, \cdots, \theta_d) \ne (0, \cdots, 0)$, then 
\begin{align*}
\omega_{N^{(\lambda)}}(\theta_1, \cdots, \theta_d)&\ge   \Cov(N_0^{(\lambda)}, N_0^{(\lambda)})  - |  \sum_{n\in \Z^d\setminus\{0\}} \Cov(N_0^{(\lambda)}, N_n^{(\lambda)}) e^{i 2\pi (n_1\theta_1 + \cdots+ n_d\theta_d)}|
\\
&>\Cov(N_0^{(\lambda)}, N_0^{(\lambda)})  -   \sum_{n\in \Z^d\setminus\{0\}} |\Cov(N_0^{(\lambda)}, N_n^{(\lambda)})|
\\
& = \sum_{n\in \Z^d} \Cov(N_0^{(\lambda)}, N_n^{(\lambda)}) =0.
\end{align*}
This shows that $\omega_{N^{(\lambda)}}$ vanishes only at $(0, \cdots, 0)$.
\end{proof}

\subsection{Stationary determinantal point processes on $\R$}

\begin{thm}\label{DPP-rigid-bis}
Assume that $B\subset \R$ satisfies
\begin{align}\label{fourier-bdd}
\sup_{R >0}\left( R \int_{|\xi| \ge R } |  \widehat{\chi_B}(\xi)|^2 d\xi\right)<\infty.
\end{align}
Then the stationary determinantal point process $\PP_{K_B}$ is linearly rigid. 
\end{thm}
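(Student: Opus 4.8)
The plan is to verify the two hypotheses of Theorem~\ref{l-rigid} for the stationary stochastic process $N^{(\lambda)} = (N_n^{(\lambda)})_{n\in\Z}$ attached to $\PP_{K_B}$, for each fixed $\lambda>0$; by Definition~\ref{window} this is exactly what it means for $\PP_{K_B}$ to be linearly rigid. Proposition~\ref{prop-rep} already provides half of the required input: the covariances are absolutely summable and $\sum_{n\in\Z}\Cov(N_0^{(\lambda)}, N_n^{(\lambda)}) = 0$, which is condition \eqref{cov-cond}. Hence the only thing left to establish is the decay estimate \eqref{summable}, that is, $\sup_{N\ge1}\bigl(N\sum_{|n|\ge N}|\Cov(N_0^{(\lambda)}, N_n^{(\lambda)})|\bigr)<\infty$, and this is precisely where the hypothesis \eqref{fourier-bdd} enters.

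First I would rewrite the tail sum using formula \eqref{cor-form}: writing $Q_n = n\lambda + [-\lambda/2,\lambda/2)$ and $K_B(x,y) = \widehat{\chi_B}(x-y)$, one has
$$
\sum_{|n|\ge N}|\Cov(N_0^{(\lambda)}, N_n^{(\lambda)})| = \int_{Q_0}\Bigl(\int_{\bigcup_{|n|\ge N}Q_n}|\widehat{\chi_B}(x-y)|^2\,dy\Bigr)\,dx.
$$
Since the intervals $\{Q_n\}_{n\in\Z}$ tile $\R$, we have $\bigcup_{|n|\ge N}Q_n = \{\,y:\ |y|\ge (N-\tfrac12)\lambda\,\}$, and for $x\in Q_0$ and such $y$ this forces $|x-y|\ge (N-1)\lambda$. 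Using also that $|\widehat{\chi_B}(-\xi)| = |\widehat{\chi_B}(\xi)|$ because $\chi_B$ is real-valued, the change of variable $\xi = x-y$ (for fixed $x$) together with $\Leb(Q_0)=\lambda$ gives, for $N\ge2$,
$$
\sum_{|n|\ge N}|\Cov(N_0^{(\lambda)}, N_n^{(\lambda)})| \le \lambda\int_{|\xi|\ge (N-1)\lambda}|\widehat{\chi_B}(\xi)|^2\,d\xi.
$$
Now apply the hypothesis \eqref{fourier-bdd} with $R = (N-1)\lambda$: setting $C = \sup_{R>0}\bigl(R\int_{|\xi|\ge R}|\widehat{\chi_B}(\xi)|^2\,d\xi\bigr)<\infty$, the right-hand side is at most $C/(N-1)$, so $N\sum_{|n|\ge N}|\Cov(N_0^{(\lambda)}, N_n^{(\lambda)})| \le CN/(N-1)\le 2C$ for all $N\ge2$. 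For $N=1$ the quantity $\sum_{|n|\ge1}|\Cov(N_0^{(\lambda)}, N_n^{(\lambda)})|$ is just a finite number by Proposition~\ref{prop-rep}. Taking the supremum over $N\ge1$ yields \eqref{summable}, and Theorem~\ref{l-rigid} then gives linear rigidity of $N^{(\lambda)}$, hence of $\PP_{K_B}$.

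I do not expect a genuine obstacle here; the steps requiring care are all bookkeeping: correctly identifying $\bigcup_{|n|\ge N}Q_n$ as the complement of an interval around the origin, tracking the loss of one unit of $\lambda$ when passing from the separation of $Q_0$ and $Q_n$ to the lower bound on $|x-y|$, and isolating the $N=1$ term (where the naive bound $|\xi|\ge0$ is vacuous) so that absolute summability from Proposition~\ref{prop-rep} can be invoked directly. A purely cosmetic alternative would be to use $R = N\lambda/2$ throughout, obtaining a single estimate valid uniformly for all $N\ge1$.
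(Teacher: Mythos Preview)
Your proposal is correct and follows essentially the same route as the paper's proof: both reduce to Theorem~\ref{l-rigid}, invoke Proposition~\ref{prop-rep} for \eqref{cov-cond}, and bound the tail sum via \eqref{cor-form} by $\lambda\int_{|\xi|\ge (N-1)\lambda}|\widehat{\chi_B}(\xi)|^2\,d\xi$ before applying \eqref{fourier-bdd}. Your treatment of the $N=1$ case is in fact more careful than the paper's, which simply writes the final supremum over all $N\ge1$ without isolating that term.
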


\begin{proof}
By definition of linear rigidity, we need to show that  for any $\lambda>0$, the stochastic process $N^{(\lambda)} = (N_n^{(\lambda)})_{n\in \Z}$ is linearly rigid. As in the proof of Proposition \ref{prop-rep}, we denote $N_n^{(\lambda)}$ by $N_n$.  By  Theorem \ref{l-rigid},  it suffices to show that
\begin{align}\label{v1}
 \sup_{N \ge 1} \left( N \sum_{| n | \ge N}|\Cov(N_0, N_n)|  \right) <Â \infty,
\end{align}
and 
\begin{align}\label{v2}
\sum_{n\in \Z} \Cov(N_0, N_n)  =0.
\end{align}
By Proposition \ref{prop-rep}, the identity \eqref{v2} holds in the general case. It remains to prove \eqref{v1}. 
By \eqref{cor-form}, we have 
\begin{align*}
& \sup_{N\ge 1 } \left( N \sum_{|n| \ge N } | \Cov(N_0, N_n) |\right)  = \sup_{N\ge 1 } N \int_{x\in Q_0}\int_{y\in \bigcup\limits_{|n| \ge N} Q_n} | \widehat{\chi}_B(x-y)|^2dxdy
\\
 =&  \sup_{N\ge 1 } N \int_{-\lambda/2}^{\lambda/2} \int_{|y| \ge (N-1/2)\lambda} | \widehat{\chi}_B(x-y)|^2dxdy 
 \\
 \le & \sup_{N\ge 1 } N \int_{-\lambda/2}^{\lambda/2} \int_{|\xi | \ge (N-1)\lambda} | \widehat{\chi}_B(\xi)|^2dxdy 
=  \sup_{N\ge 1 } \lambda N  \int_{|\xi|\ge (N-1)\lambda } | \widehat{\chi}_B(\xi)|^2d\xi< \infty,
 \end{align*}
where in the last inequality, we used our assumption  \eqref{fourier-bdd}. Theorem \ref{DPP-rigid-bis} is proved completely.
 \end{proof}

\begin{rem} 
When $B$ is a finite union of finite intervals on the real line, the rigidity of the stationary determinantal point 
process $\PP_{K_B}$ is due to Ghosh \cite{Ghosh-sine}. 
\end{rem}

\subsection{Tensor product of sine kernels}
In higher dimension, the situation becomes quite different. Let 
$$
S= I \times I =  (-1/2, 1/2)\times (-1/2, 1/2) \subset \R^2.
$$ 
Then the associate kernel $K_S$ has a tensor form: $
K_S = K_\sine \otimes K_\sine,
$
that is, for $x= (x_1, x_2)$ and $y= (y_1, y_2)$ in $\R^2$, we have
$$
K_S(x,y) = K_{\sine}(x_1, y_1) K_\sine(x_2,y_2) = \frac{ \sin(\pi(x_1-y_1))}{\pi(x_1-y_1)}  \frac{ \sin(\pi(x_2-y_2))}{\pi(x_2-y_2)}.
$$

\begin{prop}\label{counter-ex}
The determinantal point process $\PP_{K_S}$ is not linearly rigid. More precisely,  let  $N^{(1)} = (N_n^{(1)})_{n\in \Z^2}$ be the stationary stochastic process given as in Definition \ref{window}, then 
$$
N_0^{(1)} \notin \check{H}_0(N^{(1)}).
$$
\end{prop}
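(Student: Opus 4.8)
The plan is to use the Kolmogorov criterion (Lemma~\ref{K}, or rather Corollary~\ref{prop-l-rig}) in reverse: to prove that $\PP_{K_S}$ is \emph{not} linearly rigid, it suffices to exhibit the spectral density $\omega_{N^{(1)}}$ of the process $N^{(1)}$ and show that $\int_{\T^2}\omega_{N^{(1)}}^{-1}\,dm<\infty$. Since $K_S$ is a projection, Proposition~\ref{prop-rep} and Lemma~\ref{lem-density} already tell us that $\omega_{N^{(1)}}$ is a continuous function on $\T^2$ vanishing at $0$, with $\omega_{N^{(1)}}(\theta)=\sum_{n\in\Z^2}\Cov(N_0^{(1)},N_n^{(1)})e^{i2\pi n\cdot\theta}$. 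So the whole proposition reduces to a quantitative lower bound: I must show $\omega_{N^{(1)}}(\theta)\gtrsim |\theta_1|+|\theta_2|$ (or at least that it is not integrable against its own reciprocal), i.e. that near the origin $\omega_{N^{(1)}}$ does not decay faster than linearly in \emph{each} coordinate direction, which would make $\omega_{N^{(1)}}^{-1}$ integrable in $2$ dimensions (a linear vanishing like $|\theta|$ is integrable against $1/|\theta|$ in $\R^2$, whereas in $\R^1$ it is not — this is the source of the dimensional discrepancy).

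The key computational step is to exploit the tensor structure. By \eqref{cor-form}, with $\lambda=1$ and $Q_n=n+[-1/2,1/2)^2$, we have $\Cov(N_0^{(1)},N_n^{(1)})=-\iint_{Q_0\times Q_n}|K_S(x,y)|^2\,dx\,dy$ for $n\neq 0$, and $K_S(x,y)=K_\sine(x_1,y_1)K_\sine(x_2,y_2)$ factors, so that for $n=(n_1,n_2)\neq 0$,
\begin{align*}
\Cov(N_0^{(1)},N_n^{(1)})=-c_{n_1}c_{n_2},\qquad c_k:=\iint_{I\times(k+I)}|K_\sine(s,t)|^2\,ds\,dt\ (k\neq 0),
\end{align*}
while for $n=0$ one has the variance term from \eqref{cor-form2}. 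Writing $c_0:=\Var$ of the one-dimensional window count $=\int_I ds\,\bigl(1-\int_I|K_\sine(s,t)|^2dt\bigr)$, one checks that the one-dimensional covariance sequence $(c_k)_{k\in\Z}$ (with $c_k\ge 0$ for $k\ne 0$, $c_0>0$) has generating function $\omega_\sine(\theta_1):=\sum_k \Cov_k e^{i2\pi k\theta_1}$ where $\Cov_0=c_0$, $\Cov_k=-c_k$ for $k\neq0$; this is exactly the spectral density of the one-dimensional sine-process window counts, which is known (and re-derivable from \eqref{rep-kb} applied to $K_\sine$) to vanish to first order at $\theta_1=0$, i.e. $\omega_\sine(\theta_1)\asymp|\theta_1|$ as $\theta_1\to 0$, and $\omega_\sine(\theta_1)>0$ for $\theta_1\neq 0$. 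The product structure of the covariances then gives the crucial factorization
\begin{align*}
\omega_{N^{(1)}}(\theta_1,\theta_2)=\omega_\sine(\theta_1)\,\omega_\sine(\theta_2),
\end{align*}
which I would verify by matching Fourier coefficients, being careful with the diagonal terms $n_1=0$ or $n_2=0$ (the bookkeeping here is the only delicate point: one must confirm the sign/variance terms combine so that the product of the two one-dimensional densities reproduces all of $\Cov(N_0^{(1)},N_n^{(1)})$, including $n=0$).

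Granting the factorization, the conclusion is immediate: $\omega_{N^{(1)}}^{-1}(\theta)=\omega_\sine(\theta_1)^{-1}\omega_\sine(\theta_2)^{-1}$, and since $\omega_\sine$ is continuous, strictly positive away from $0$, and $\asymp|\theta_i|$ near $0$, the function $\omega_\sine^{-1}$ is integrable on $\T$ in \emph{one} variable only up to the singularity $\int_{-1/2}^{1/2}|\theta_1|^{-1}d\theta_1=\infty$ — wait, that diverges; so I instead argue via Tonelli that $\int_{\T^2}\omega_{N^{(1)}}^{-1}dm=\bigl(\int_\T\omega_\sine^{-1}\,dm\bigr)^2$, and by the Kolmogorov criterion applied in dimension one, $\int_\T\omega_\sine^{-1}dm=\dist(X_0,\check H_0(X))^{-2}$ for the one-dimensional sine window process $X$, which is a \emph{finite} number precisely because the one-dimensional sine-kernel window process is \emph{not} linearly rigid in the sense of isolated-window-count prediction — in fact $\dist>0$ since $\omega_\sine\asymp|\theta|$ makes $\int|\theta|^{-1}$ diverge, forcing $\dist=0$... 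This is the genuine subtlety, so let me restate the real mechanism: the point is that $\omega_\sine$ vanishes only at $\theta_1=0$ to order exactly one, so $\omega_\sine^{-1}$ behaves like $|\theta_1|^{-1}$, hence $\int_\T\omega_\sine^{-1}=\infty$; BUT in two dimensions $\omega_{N^{(1)}}^{-1}=\omega_\sine(\theta_1)^{-1}\omega_\sine(\theta_2)^{-1}$ and near the origin this is $\asymp(|\theta_1||\theta_2|)^{-1}$, whose integral over $[-1/2,1/2]^2$ is $\bigl(\int_{-1/2}^{1/2}|\theta_1|^{-1}d\theta_1\bigr)^2=\infty$ as well. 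Thus the \emph{correct} argument must be that $\omega_S$ does \emph{not} factor as a pure product but rather $\omega_S(\theta)\asymp|\theta|=\sqrt{\theta_1^2+\theta_2^2}$ near $0$ coming from $\omega_S(\theta)=1-\int_{\text{unit square}\times(\theta+\text{unit square})}$-type overlap, making $\omega_S^{-1}\asymp|\theta|^{-1}$ which \emph{is} integrable in $\R^2$; so the main obstacle, and the heart of the proof, is establishing the sharp two-sided bound $c_1^{-1}|\theta|\le \omega_{N^{(1)}}(\theta)\le c_2|\theta|$ near the origin (linear, \emph{radial}, vanishing) directly from the explicit convolution-kernel formula \eqref{cor-form}, \eqref{cor-form2} for $K_S=K_\sine\otimes K_\sine$, after which $\int_{\T^2}\omega_{N^{(1)}}^{-1}dm\le c_1\int_{|\theta|\le 1}|\theta|^{-1}dm<\infty$ and Corollary~\ref{prop-l-rig} finishes the proof. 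I would carry this out by computing $\widehat{\chi_S}(\xi_1,\xi_2)=\widehat{\chi_I}(\xi_1)\widehat{\chi_I}(\xi_2)$, using $\Cov(N_0^{(1)},N_n^{(1)})=-\iint|\widehat{\chi_S}(x-y)|^2$ as in Theorem~\ref{DPP-rigid-bis}, recognizing the resulting density as a periodization, and estimating it near $\theta=0$; the delicate part is getting the \emph{lower} bound $\omega_{N^{(1)}}(\theta)\gtrsim|\theta|$ uniformly, for which I expect to need the positivity of the relevant overlap integrals and a careful Taylor expansion.
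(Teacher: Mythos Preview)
Your overall strategy --- reduce to showing $\omega_{N^{(1)}}(\theta)\gtrsim |\theta_1|+|\theta_2|$ near the origin and then invoke the Kolmogorov criterion --- is exactly right, and it is also what the paper does. But the proposal contains a genuine error at the key computational step, and after you notice the contradiction you never recover the correct formula.

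The tensor structure does \emph{not} give $\omega_{N^{(1)}}(\theta_1,\theta_2)=\omega_\sine(\theta_1)\,\omega_\sine(\theta_2)$. Write $a_k:=\iint_{I\times(k+I)}|K_\sine(s,t)|^2\,ds\,dt$ for all $k\in\Z$ and $A(\theta):=\sum_{k\in\Z}a_k e^{i2\pi k\theta}$, so that the one-dimensional density is $\omega_\sine(\theta)=1-A(\theta)$. From \eqref{cor-form} and \eqref{cor-form2} one gets $\Cov(N_0^{(1)},N_n^{(1)})=-a_{n_1}a_{n_2}$ for $n\ne 0$ and $\Var(N_0^{(1)})=1-a_0^2$, whence
\[
\omega_{N^{(1)}}(\theta_1,\theta_2)=1-A(\theta_1)A(\theta_2)=\omega_\sine(\theta_1)+\omega_\sine(\theta_2)-\omega_\sine(\theta_1)\,\omega_\sine(\theta_2),
\]
not the product. (The sign of the off-diagonal covariances is what spoils your bookkeeping: the $(j,k)$ Fourier coefficient for $j,k\ne 0$ is $-a_ja_k$, whereas the product of the two one-dimensional densities would give $+a_ja_k$.) This identity immediately explains why you were getting $(|\theta_1||\theta_2|)^{-1}$ and why the truth is $\asymp |\theta_1|+|\theta_2|$: for $\theta$ near $0$ the cross term $\omega_\sine(\theta_1)\omega_\sine(\theta_2)$ is of lower order, so $\omega_{N^{(1)}}\ge \tfrac12(\omega_\sine(\theta_1)+\omega_\sine(\theta_2))$ once both $\omega_\sine(\theta_i)\le\tfrac12$. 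The only remaining ingredient is the one-dimensional lower bound $\omega_\sine(\theta)\ge c|\theta|$, which is precisely the content of the paper's Lemma~\ref{lem-in}; after that, $\int_{\T^2}(|\theta_1|+|\theta_2|)^{-1}\,dm<\infty$ finishes the proof via Corollary~\ref{prop-l-rig}.

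For comparison, the paper does not write down the identity $\omega_{N^{(1)}}=1-A(\theta_1)A(\theta_2)$. Instead it controls the mixed second difference $\omega(\theta_1,\theta_2)-\omega(\theta_1,0)-\omega(0,\theta_2)$ by appealing to the F\"ul\"op--M\'oricz theorem placing $\omega_{N^{(1)}}$ in the multiple Zygmund class $\Lambda_*(1,1)$, which yields $|\omega(\theta_1,\theta_2)-\omega(\theta_1,0)-\omega(0,\theta_2)|\le C|\theta_1\theta_2|$, and then proves the axis bounds $\omega(\theta_1,0)\ge c|\theta_1|$ separately. Your corrected route is in fact more elementary: the explicit identity replaces the Zygmund-class machinery entirely, and one sees directly that $\omega(\theta_1,0)=\omega_\sine(\theta_1)$, so the two ``axis'' lemmas coincide. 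What remains vague in your write-up --- ``recognizing the resulting density as a periodization'' and ``a careful Taylor expansion'' --- is not needed once the identity is in hand; the only analytic input is the one-variable lower bound on $\omega_\sine$.
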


\bigskip

To prove the above result, we need to introduce some extra notation. First, we define the multiple Zygmund class $\Lambda_{*}$ as follows. A continuous function $\varphi(x,y)$ periodic in each variable with period $1$ is said to be in the multiple Zygmund class $\Lambda_{*}(1,1)$ if for the double difference difference operator $\Delta_{2,2}$ of second order in each variable, applied to $\varphi$, there exists a constant $C>0$, such that for all $x= (x_1,x_2) \in (-1/2,1/2)\times (-1/2, 1/2)$ and $h_1, h_2 >0$, we have
\begin{align}\label{double-d}
| \Delta_{2,2} \varphi(x_1,x_2; h_1, h_2)|\le C h_1 h_2,
\end{align}
where 
\begin{align*}
& \Delta_{2,2} \varphi(x_1, x_2; h_1, h_2)
: =  \varphi (x_1 + h_1, x_2 + h_2) + \varphi (x_1 - h_1, x_2 + h_2) 
\\
& \quad + \varphi (x_1 + h_1, x_2 - h_2) + \varphi (x_1 - h_1, x_2 - h_2) - 2 \varphi (x_1 + h_1, x_2) 
\\
 & - 2 \varphi (x_1-h_1, x_2 ) - 2 \varphi (x_1, x_2 + h_2) - 2 \varphi (x_1, x_2 - h_2) + 4 \varphi (x_1, x_2 ).
\end{align*}

The following result is due to F\"ul\"op and M\'oricz \cite[Thm 2.1 and Rem. 2.3]{Fulop-Moricz}
\begin{thm}[F\"ul\"op-M\'oricz]\label{FM}
If $\{c_{jk}\}_{j, k \in \Z} \in \C $ is such that
\begin{align}\label{big-O}
\sup_{N\ge1, M \ge 1} \left( M N \sum_{|j| \ge N, |k| \ge M}  | c_{jk}| \right) < \infty, 
\end{align}
then the function 
$$
\varphi (\theta_1, \theta_2)= \sum_{j, k \in \Z} c_{jk} e^{i 2 \pi (j \theta_1 + k\theta_2)}
$$ is in the Zygmund class $\Lambda_{*}(1,1)$.
\end{thm}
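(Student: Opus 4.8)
The plan is to verify the defining inequality \eqref{double-d} directly on the Fourier series, exploiting the fact that the operator $\Delta_{2,2}$ factors as a composition of the two one-dimensional second-difference operators. Writing $\Delta_2^{(1)}$ and $\Delta_2^{(2)}$ for the central second differences in $\theta_1$ and $\theta_2$ with steps $h_1,h_2$, one reads off from the explicit expansion of $\Delta_{2,2}$ given just before the statement that $\Delta_{2,2}=\Delta_2^{(1)}\circ\Delta_2^{(2)}$. Since $\Delta_2^{(1)}e^{i2\pi j\theta_1}=(e^{i2\pi jh_1}-2+e^{-i2\pi jh_1})e^{i2\pi j\theta_1}=-4\sin^2(\pi jh_1)e^{i2\pi j\theta_1}$, applying $\Delta_{2,2}$ termwise gives
\[
\Delta_{2,2}\varphi(\theta_1,\theta_2;h_1,h_2)=16\sum_{j,k\in\Z}c_{jk}\sin^2(\pi jh_1)\sin^2(\pi kh_2)\,e^{i2\pi(j\theta_1+k\theta_2)}.
\]
In particular every term with $j=0$ or $k=0$ is annihilated, so only the coefficients with $j,k\neq0$ enter, and these are exactly the ones the hypothesis controls (taking $N=M=1$ shows $\sum_{j,k\neq0}|c_{jk}|<\infty$, which also legitimizes the termwise computation). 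Hence it suffices to prove
\[
\sum_{j,k\neq0}|c_{jk}|\,\sin^2(\pi jh_1)\sin^2(\pi kh_2)\le C\,h_1h_2 .
\]

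Next I would symmetrize over signs, setting $b_{jk}=\sum_{\varepsilon,\delta\in\{\pm1\}}|c_{\varepsilon j,\delta k}|$ for $j,k\ge1$, so that the left-hand side becomes $\sum_{j,k\ge1}b_{jk}\sin^2(\pi jh_1)\sin^2(\pi kh_2)$ and the hypothesis turns into the corner-tail bound $\sigma(n,m):=\sum_{j\ge n,\,k\ge m}b_{jk}\le A/(nm)$, where $A$ is the supremum in the hypothesis. One may assume $h_1,h_2\in(0,1)$; for larger steps $|\Delta_{2,2}\varphi|\le 16\,\sigma(1,1)\le 16A$ is bounded and the same decomposition below applies using $\sin^2\le1$ for the large step. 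Put $P=\lfloor1/h_1\rfloor\ge1$ and $Q=\lfloor1/h_2\rfloor\ge1$, and split $\{j,k\ge1\}$ into four regions according to $j\le P$ or $j>P$ and $k\le Q$ or $k>Q$, bounding $\sin^2(\pi jh_1)$ by $\pi^2 j^2h_1^2$ when $j\le P$ and by $1$ when $j>P$, and likewise in $k$. The entire estimate then rests on one elementary summation-by-parts lemma: for $a_j\ge0$ with tail $T_j=\sum_{i\ge j}a_i$ one has $\sum_{j=1}^{P}j^2a_j\le 2\sum_{j=1}^{P}j\,T_j$, obtained by Abel summation and discarding the negative boundary term (this is the one-dimensional mechanism underlying Theorem \ref{MM}).

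Finally I would dispatch the four regions. The doubly saturated region $j>P,\,k>Q$ contributes $16\,\sigma(P+1,Q+1)\le 16A/((P+1)(Q+1))\le 16A\,h_1h_2$, since $1/(P+1)<h_1$ and $1/(Q+1)<h_2$. The two mixed regions use the lemma once in the unsaturated variable: for $j\le P,\,k>Q$ the contribution is $16\pi^2h_1^2\sum_{j\le P,\,k>Q}j^2b_{jk}$, and the lemma with tail $\sum_{i\ge j,\,k>Q}b_{ik}=\sigma(j,Q+1)\le A/(jQ)$ bounds $\sum_{j\le P}j^2(\cdots)$ by $2AP/Q$, whence the contribution is $\le C\,(h_1^2P)(1/Q)\le C\,h_1h_2$ using $h_1^2P\le h_1$ and $1/Q\le 2h_2$; the region $j>P,\,k\le Q$ is symmetric. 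The genuinely two-dimensional region $j\le P,\,k\le Q$ requires applying the lemma twice: summing first in $k$ gives $\sum_{k\le Q}k^2b_{jk}\le 2\sum_{k\le Q}k\,\tau_j(k)$ with $\tau_j(k)=\sum_{l\ge k}b_{jl}$, and then summing in $j$ with the lemma again, using $\sum_{i\ge j}\tau_i(k)=\sigma(j,k)\le A/(jk)$, yields $\sum_{j\le P,\,k\le Q}j^2k^2b_{jk}\le 4APQ$; multiplying by $16\pi^4h_1^2h_2^2$ and using $h_1^2P\le h_1$, $h_2^2Q\le h_2$ gives $\le C\,h_1h_2$. Summing the four bounds proves \eqref{double-d}. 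I expect the main obstacle to be precisely this double Abel summation in the last region: organizing the iterated summation by parts so that the product weight $j^2k^2$ is absorbed against the product corner-tail bound $A/(jk)$, together with the bookkeeping that converts the thresholds $P\approx1/h_1$, $Q\approx1/h_2$ into the target rate $h_1h_2$.
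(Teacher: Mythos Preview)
The paper does not prove Theorem~\ref{FM}; it is quoted from F\"ul\"op--M\'oricz \cite{Fulop-Moricz} and used as a black box. Your proposal therefore cannot be compared with a proof in the paper, but it is a correct self-contained argument. The termwise identity $\Delta_{2,2}\varphi=16\sum c_{jk}\sin^2(\pi jh_1)\sin^2(\pi kh_2)e^{i2\pi(j\theta_1+k\theta_2)}$ is valid, the symmetrization to $b_{jk}$ is harmless, the Abel-summation lemma $\sum_{j\le P}j^2a_j\le 2\sum_{j\le P}jT_j$ is checked by a one-line computation, and the four-region bounds go through as you wrote; in particular the iterated Abel summation in the region $j\le P,\,k\le Q$ is executed correctly once one swaps the order of the $j$ and $k$ sums before the second application of the lemma (which is what your tail identification $\sum_{i\ge j}\tau_i(k)=\sigma(j,k)$ implicitly does). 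The reduction to $h_1,h_2\in(0,1)$ is justified by $1$-periodicity of $\varphi$ in each variable. This is essentially the standard proof strategy, and is in the same spirit as the one-variable argument of M\'oricz behind Theorem~\ref{MM}; what you have written is a faithful two-dimensional version of it.
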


\bigskip

Let us turn to the study of the density function $\omega_{N^{(1)}}$.

\begin{lem}\label{lem-lower}
There exists $c>0$, such that for any $\theta_1, \theta_2 \in [-1/2, 1/2]$, we have 
\begin{align}\label{low-bdd}
\omega_{N^{(1)}}(\theta_1, \theta_2) \ge c(|\theta_1| + | \theta_2|).
\end{align}
\end{lem}
\begin{proof}
To make notation lighter, in this proof we simply  write  $\omega$ for $\omega_{N^{(1)}}$.  

For any $n   = (n_1, n_2) \in\Z^2$,  let us denote $S_n=S \times (n+S)$ where  
$$
 n+S: = (-1/2+n_1, 1/2+n_1)\times (-1/2+n_2, 1/2+n_2).
 $$ 
 By
the same argument as in the proof of Proposition  \ref{prop-rep}, we obtain that for any $n = (n_1, n_2) \in \Z^2 \setminus \{0\}$, 
\begin{align}\label{neg-non-ori}
\widehat{\omega}(n)= -\int_{S_n} | K_S(x,y)|^2 dxdy,
\end{align}
and 
\begin{align*}
\widehat{\omega}(0) = K_S(0,0) - \int_{S_0} | K_S(x,y)|^2 dxdy.
\end{align*}
The  following properties can be easily checked.
\begin{itemize}
\item $\sum_{n\in \Z^2} \widehat{\omega}(n) = 0$.
\item $\widehat{\omega}( \varepsilon_1 n_1, \varepsilon_2 n_2)= \widehat{\omega}(n_1,n_2)$, where $\varepsilon_1, \varepsilon_2 \in \{\pm 1\}.$
\item there exist $c, C>0$, such that $$ \frac{c}{(1+ n_1^2) (1 + n_2^2)} \le | \widehat{\omega}(n_1, n_2)| \le \frac{C}{(1+n_1^2)(1+n_2^2)}.$$
\end{itemize}
For instance, $\sum_{n\in \Z^2} \widehat{\omega}(n) = 0$ follows from Proposition \ref{prop-rep}. 
These properties combined with Theorem  \ref{FM} yield that 
\begin{itemize}
\item $\omega(0,0)= 0$.
\item $\omega(\varepsilon_1\theta_1, \varepsilon_2\theta_2) = \omega(\theta_1, \theta_2)$ for any $\varepsilon_1, \varepsilon_2 \in \{\pm1\}$ and $\theta_1, \theta_2 \in (-1/2, 1/2)$.
\item the function $\omega(\theta_1, \theta_2)$ is in the multiple Zygmund class $\Lambda_{*}(1,1)$.
\end{itemize}
Hence there exists $C>0$, such that 
\begin{align}\label{zygmund}
| \omega(\theta_1, \theta_2) - \omega(\theta_1, 0) - \omega(0, \theta_2)| \le C | \theta_1\theta_2|. 
\end{align}

\begin{lem}\label{lem-in} There exists $c_1>0$, such that 
\begin{align}\label{claim}
\omega(\theta_1, 0) \ge c_1 | \theta_1| \an \omega(0, \theta_2) \ge c_1 | \theta_2|. 
\end{align}
\end{lem}

Let us postpone the proof of Lemma \ref{lem-in} and proceed to the proof of Lemma \ref{lem-lower}. The inequalities \eqref{zygmund} and \eqref{claim} imply that 
\begin{align*}
\omega(\theta_1, \theta_2) \ge c_1 (| \theta_1| + | \theta_2|) - C| \theta_1\theta_2|.
\end{align*}
Now  if $|\theta_1|$ is small enough such that $ 2 C | \theta_1|  \le c_1$, then we have 
$$
\omega(\theta_1, \theta_2) \ge \frac{c_1}{2}(| \theta_1| + | \theta_2|).
$$
If $2 C | \theta_1|  \ge c_1$, by Corollary \ref{cor-cont-0}, the function $\omega(\theta_1, \theta_2)$ is continuous on $[-\frac{1}{2}, \frac{1}{2}]^2$ and vanishes only at $(0, 0)$. Consequently, 
$$
\inf_{|\theta_1| \ge c_1/2C} \omega(\theta_1, \theta_2)  = c_2 >0.
$$ 
It follows,  by using the elementary fact that $|\theta_1| + |\theta_2|\le 1$, that
$$
\inf_{|\theta_1| \ge c_1/2C} \omega(\theta_1, \theta_2) = c_2 \ge \frac{c_2}{2} (|\theta_1| + |\theta_2|). 
$$
Taking $c= \min(\frac{c_1}{2},\frac{c_1}{2})$, we get the desired inequality \eqref{low-bdd}. 
\end{proof}

Now let us turn to the proof of Lemma \ref{lem-in}. 
\begin{proof}[Proof of Lemma \ref{lem-in}]
 By symmetry, it suffices to prove that there exists $c>0$, such that $\omega(\theta_1, 0) \ge  c | \theta_1|$. To this end, let us denote $\omega_1(\theta_1): = \omega(\theta_1,0)$. Then $\omega_1(0) = 0$ and there exists $c>0$ such that if $k\ne 0$, then
\begin{align*}
\widehat{\omega}_1(k) < 0 \an  |\widehat{\omega}_1(k)| \ge c/(1+k^2).
\end{align*}
Indeed, we have 
\begin{align*}
\omega_1(\theta_1)=  \sum_{k\in \Z}\sum_{n_2\in \Z} \widehat{\omega}(k, n_2) e^{i 2 \pi k \theta_1}.
\end{align*}
If $k\ne 0$, then by \eqref{neg-non-ori}, we have $\widehat{\omega}(k, n_2) < 0$ and hence
\begin{align}\label{low-k}
|\widehat{\omega}_1(k)|=  \sum_{n_2\in \Z}|\widehat{\omega}(k, n_2)| \ge \sum_{n_2\in\Z} \frac{c}{(1+n_2^2)(1+k^2)} \ge \frac{c'}{1+k^2}.
\end{align}
We claim that  $\omega_1(0)= 0$. Indeed, by definition, we have 
\begin{align*}
\omega_1(0) = \sum_{k\in \Z}\sum_{n_2\in \Z} \widehat{\omega}(k, n_2)  = \omega(0, 0) =0,
\end{align*}
where in the last equality, we used Corollary \ref{cor-cont-0} that  claims $\omega(0, 0) =0$. 
Now we have 
\begin{align*}
\sum_{k\in \Z} \widehat{\omega}_1(k) = \omega_1(0) = 0.
\end{align*}
It follows that 
\begin{align*}
\omega_1(\theta_1) &  =   \sum_{k\in\Z} \widehat{\omega}_1(k) e^{i 2\pi k \theta_1} = \sum_{k\in \Z} \widehat{\omega}_1(k) (\frac{e^{i 2\pi k \theta_1} + e^{- i 2\pi k \theta_1} }{2}-1) 
\\ 
&  = \sum_{k\in \Z, k \ne 0} -\widehat{\omega}_1(k) ( 1 - \cos (2\pi k \theta_1)) = \sum_{k\in \Z, k \ne 0} |\widehat{\omega}_1(k)| ( 1 - \cos (2\pi k \theta_1)). 
\end{align*}
Since $ |\widehat{\omega}_1(k)| ( 1 - \cos (2\pi k \theta_1))$ is non-negative for any $k\in\Z$, we have 
\begin{align*}
\omega_1(\theta_1) \ge \sum_{j=1}^{\infty} |\widehat{\omega}_1(2j-1)| ( 1 - \cos (2\pi (2j-1) \theta_1)).
\end{align*}
The inequality \eqref{low-k} implies that there exists $c''>0$, such that   $|\widehat{\omega}_1(2j-1)| \ge \frac{c''}{(2j-1)^2} $, hence we obtain that 
\begin{align*}
\omega_1(\theta_1) & \ge  c'' \sum_{j=1}^\infty \frac{1}{(2j-1)^2} ( 1 - \cos(2 \pi (2j-1)\theta_1)).
\end{align*}
Combining with the Fourier series of the absolutely value function (the Fourier coefficient of the absolute value function on $(-\frac{1}{2}, \frac{1}{2})$ can be computed explicitly): 
$$
 |\alpha | = \frac{1}{4}- \frac{2}{\pi^2} \sum_{j= 1}^\infty\frac{\cos(2\pi (2j-1) \alpha)}{(2j-1)^2}, \text{\, for $\alpha\in (-1/2, 1/2)$;}
$$
$$
\sum_{j=1}^\infty \frac{1}{(2j-1)^2}  = \frac{\pi^2}{8} \text{(take $\alpha =0$ in the above series)}, 
$$
we obtain that 
\begin{align*}
\omega_1(\theta_1) & \ge  c'' \Big(\sum_{j=1}^\infty \frac{1}{(2j-1)^2} -  \sum_{j=1}^\infty \frac{\cos(2 \pi (2j-1)\theta_1)}{(2j-1)^2} \Big)
\\
& = c''\Big(\frac{\pi^2}{8} + \frac{\pi^2}{2} (|\theta_1| - \frac{1}{4})\Big) =  c'' \frac{\pi^2}{2}|\theta_1|.
\end{align*}
The proof of Lemma \ref{lem-in} is complete.
\end{proof}

\begin{proof}[Proof of Proposition \ref{counter-ex}]
By Lemma \ref{K}, it suffices to show that 
\begin{align}\label{finite-int}
\int_{\T^2} \omega_{N^{(1)}}^{-1} dm< \infty.
\end{align}
By Lemma \ref{lem-lower}, the inequality \eqref{finite-int} follows from the following elementary inequality
\begin{align*}
\int_{| \theta_1|< 1/2, | \theta_2| < 1/2} \frac{1}{| \theta_1| + | \theta_2|} d\theta_1 d\theta_2 < \infty.
\end{align*}
\end{proof}

{\bf{Acknowledgements.}}
We would like to thank  Evgeny V. Abakumov, Guillaume Aubrun  and Nikolai K. Nikolskii for many valuable discussions. We are deeply grateful to the anonymous referees
for the careful reading of the manuscript and many helpful suggestions concerning presentation.
A. Bufetov and Y. Qiu are supported by A*MIDEX project (No. ANR-11-IDEX-0001-02), financed by Programme ``Investissements d'Avenir'' of the Government of the French Republic managed by the French National Research Agency (ANR). 

The research of A. Bufetov on this project has received funding from the European Research Council (ERC) under the European Union's Horizon 2020 research and innovation programme under grant agreement No 647133 (ICHAOS). It has also been funded by the Grant MD 5991.2016.1 of the President of the Russian Federation and by  the Russian Academic Excellence Project `5-100'.


\def\cprime{$'$} \def\cydot{\leavevmode\raise.4ex\hbox{.}}

\end{document}